\numberwithin{equation}{section}
\newtheorem{thm}{Theorem}[section]
\newtheorem{cor}[thm]{Corollary}
\newtheorem{lem}[thm]{Lemma}
\newtheorem{prop}[thm]{Proposition}
\theoremstyle{definition}
\newtheorem{example}[thm]{Example}
\DeclareMathOperator{\re}{Re}
\begin{document}
\title[Circle embeddings with restrictions on Fourier coefficients]{Circle embeddings with restrictions on Fourier coefficients}

\author[L. Li]{Liulan Li}
\address{Liulan Li, College of Mathematics and Statistics,
Hengyang Normal University, Hengyang,  Hunan 421002, People's
Republic of China.} \email{lanlimail2012@sina.cn}

\author[L. V. Kovalev]{Leonid V. Kovalev}
\address{Leonid V. Kovalev,
Department of Mathematics, Syracuse University, 215 Carnegie,
Syracuse, NY 13244, USA.} \email{lvkovale@syr.edu}

\subjclass[2010]{Primary: 31A05; Secondary: 30J10, 42A16.}

\keywords{circle embeddings, circle homeomorphisms, Blaschke products, rational functions}

\begin{abstract} This paper continues the investigation of the relation between the geometry of a circle embedding and the values of its Fourier coefficients. First, we answer a question of Kovalev and Yang concerning the support of the Fourier transform of a starlike embedding. An important special case of circle embeddings are homeomorphisms of the circle onto itself. Under a one-sided bound on the Fourier support, such homeomorphisms are rational functions related to Blaschke products. We study the structure of rational circle homeomorphisms and show that they form a connected set in the uniform topology.
\end{abstract}

\maketitle \pagestyle{myheadings} \markboth{L.
Li and L. V. Kovalev}{Circle embeddings with restrictions on Fourier coefficients}

\section{Introduction}

Every continuous map $f$ of the unit circle
$\mathbb{T}=\{z\in\mathbb{C}\colon |z|=1\}$ to the complex plane extends to a harmonic map $F$ of the unit disk $\mathbb D=\{z\in\mathbb{C}\colon |z|<1\}$, and the Taylor coefficients of $F$ are given by the Fourier coefficients of $f$, namely $\hat{f}(n)=\frac{1}{2\pi}\int^{2\pi}_{0} f(e^{i\theta})e^{-i n\theta}d\theta$. This simple but important relation has consequences both for geometric function theory and for the theory of minimal surfaces~\cite{Du2, Ha, He}, especially when $f$ is an embedding, i.e., an injective continuous map. For example, when $f$ is a sense-preserving embedding with a convex image, the Rad\'o-Kneser-Choquet theorem~\cite[p. 29]{Du2} states that $F$ is a sense-preserving diffeomorphism and in particular $\hat f(1)\ne 1$. On the other hand, for every integer $N$ there exists a sense-preserving embedding $f\colon \mathbb T\to \mathbb{C}$ such that $\hat f(n)=0$ whenever $|n|\le N$~\cite[Theorem 5.1]{KY}. Thus, some restrictions on the shape of $f(\mathbb T)$ are necessary to obtain a non-vanishing result for $\hat f$.

By a circle embedding we mean an injective continuous map $f\colon \mathbb T\to\mathbb C$. In the special case $f(\mathbb{T})=\mathbb{T}$ the map $f$ is called a circle homeomorphism. The curve $f(\mathbb{T})$ is called shar-shaped about $w_0\in \mathbb C$ if the argument of $f(e^{i\theta}) - w_0$ is a monotone function of $\theta$. In this case $f$ is called a starlike embedding. In this paper we answer Question~5.1 in~\cite{KY} by proving that for a starlike embedding $f$, at least one of the coefficients $\hat{f}(1)$ and $\hat{f}(-1)$ is nonzero.

If the Fourier series of a sense-preserving circle homeomorphism $f\colon \mathbb T\to \mathbb T$ terminates in either direction, then $f$ is the restriction of the quotient of two Blaschke products~\cite[Proposition 3.2]{KY}.  It is difficult to describe exactly which ratios of Blaschke products restrict to circle homeomorphisms. We give some sufficient conditions in Section~\ref{circle homeomorphism}. In Section~\ref{the space of circle homeomorphisms}, specifically Theorems~\ref{space of special circle homeomorphism} and~\ref{space of circle homeomorphism} we show that the set of rational circle homeomorphisms is connected in the uniform topology.

\section{Main results and preliminaries}\label{preliminaries}

Hall~\cite[Theorem 2]{Ha} proved that $|\hat{f}(1)|+|\hat{f}(0)|>0$ when $f$ is a sense-preserving embedding and $f(\mathbb T)$ is star-shaped about  $0$. The first of our main results, Theorem~\ref{starlike}, allows $f(\mathbb T)$ to be star-shaped about any point; it also applies to sense-reversing embeddings.

\begin{thm}\label{starlike} Let $f\colon \mathbb{T}\to \mathbb{C}$ be a starlike embedding. Then
\[
|\hat{f}(1)|+|\hat{f}(-1)|>0.
\]
\end{thm}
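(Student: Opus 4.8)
\emph{Proof proposal.} The plan is to adapt the Kneser--Choquet gradient lemma used in the proof of the Rad\'o--Kneser--Choquet theorem, applied to a rotated harmonic extension. First I would normalize. Translating $f(\mathbb T)$ changes neither $\hat f(1)$ nor $\hat f(-1)$, so I may assume the curve is star-shaped about $0$; and replacing $f(z)$ by $f(\bar z)$ interchanges $\hat f(1)$ with $\hat f(-1)$ and reverses orientation, so I may assume that the argument of $f(e^{i\theta})$ is non-decreasing. Write $f(e^{i\theta})=\rho(\theta)e^{i\phi(\theta)}$ with $\rho>0$ continuous and $\phi$ continuous and non-decreasing; since $f$ is an embedding, $f(\mathbb T)$ is a Jordan curve with winding number $1$ about $0$, so $\phi(\theta+2\pi)=\phi(\theta)+2\pi$. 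Let $F$ be the Poisson extension of $f$ to $\mathbb D$, and write $F=h+\overline g$ with $h(z)=\sum_{n\ge 0}\hat f(n)z^n$ and $g(z)=\sum_{m\ge 1}\overline{\hat f(-m)}\,z^m$, so that $h'(0)=\hat f(1)$ and $g'(0)=\overline{\hat f(-1)}$.

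Next I would choose the rotation parameter. A non-decreasing function is constant on at most countably many maximal intervals, so only countably many real numbers are attained by $\phi$ on a nondegenerate interval; choose $\phi_0\in\mathbb R$ so that neither $\phi_0+\tfrac\pi2$ nor $\phi_0-\tfrac\pi2$ is such a number modulo $2\pi$. Put $u(z)=\re\bigl(e^{-i\phi_0}F(z)\bigr)$; this is a nonconstant real harmonic function on $\mathbb D$, continuous on $\overline{\mathbb D}$, with boundary values $u(e^{i\theta})=\rho(\theta)\cos(\phi(\theta)-\phi_0)$. Because $\rho>0$ and $\phi$ increases by exactly $2\pi$ over a period, crossing each of the levels $\phi_0\pm\tfrac\pi2$ (mod $2\pi$) exactly once, with a genuine increase there by the choice of $\phi_0$, the function $u|_{\mathbb T}$ changes sign exactly twice on $\mathbb T$.

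By the Kneser--Choquet lemma, a nonconstant real harmonic function on $\mathbb D$ that is continuous on $\overline{\mathbb D}$ and whose boundary values change sign exactly twice has nonvanishing gradient throughout $\mathbb D$; in particular $\nabla u(0)\ne 0$, i.e.\ $\partial_z u(0)\ne 0$. On the other hand, differentiating $u=\tfrac12\bigl(e^{-i\phi_0}(h+\overline g)+e^{i\phi_0}(\overline h+g)\bigr)$ gives
\[
\partial_z u(0)=\tfrac12\bigl(e^{-i\phi_0}h'(0)+e^{i\phi_0}g'(0)\bigr)=\tfrac12\bigl(e^{-i\phi_0}\hat f(1)+e^{i\phi_0}\overline{\hat f(-1)}\bigr).
\]
Hence $e^{-i\phi_0}\hat f(1)+e^{i\phi_0}\overline{\hat f(-1)}\ne 0$, so $\hat f(1)$ and $\hat f(-1)$ are not both zero, which is the assertion. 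The normalizations and the bookkeeping for $h,g$ are routine; the step that requires care --- and that I expect to be the main obstacle --- is the boundary analysis of $u$: one must select $\phi_0$ avoiding the countably many levels on which $\phi$ is locally constant and then verify that $u|_{\mathbb T}$ has \emph{exactly} two sign changes (not merely at most two), and it is here that injectivity of $f$ enters, through the degree-one property $\phi(\theta+2\pi)=\phi(\theta)+2\pi$, rather than just star-shapedness of the image curve.
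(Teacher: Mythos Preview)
Your overall strategy is appealing, but the step where you invoke ``the Kneser--Choquet lemma'' does not go through as stated. The lemma you quote --- that a nonconstant real harmonic function on $\mathbb D$ whose boundary values change sign exactly twice has nonvanishing gradient throughout $\mathbb D$ --- is false. A counterexample is $u(z)=\re(z+z^2)$: on $\mathbb T$ one has $u(e^{i\theta})=\cos\theta+\cos 2\theta=(2\cos\theta-1)(\cos\theta+1)$, which is positive on $(-\pi/3,\pi/3)$, nonpositive on the complementary arc, and changes sign exactly at $\theta=\pm\pi/3$ (the zero at $\theta=\pi$ is a tangential touch, not a sign change); yet $\nabla u(-1/2)=0$. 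The genuine Kneser--Choquet argument says that a critical point at $z_0$ forces $u-u(z_0)$ (not $u$ itself) to have at least four sign changes on $\mathbb T$; equivalently, the hypothesis you need is that \emph{every} level set of $u|_{\mathbb T}$, not just $\{u=0\}$, meets $\mathbb T$ in at most two arcs. For your $u(e^{i\theta})=\rho(\theta)\cos(\phi(\theta)-\phi_0)$ with an arbitrary positive continuous $\rho$, the nonzero level sets need not have this structure, and you cannot arrange $u(0)=0$ without giving up the freedom to choose $\phi_0$ generically (you would need $\re(e^{-i\phi_0}\hat f(0))=0$, which pins $\phi_0$ down to two values, and for those values $\phi$ may well be flat at $\phi_0\pm\pi/2$).

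By contrast, the paper's proof avoids harmonic extensions entirely. It uses the intermediate value theorem to locate $\theta_0$ with $\phi(\theta_0+\pi)=\phi(\theta_0)+\pi$, and then writes an explicit linear combination $\overline{\eta}\,z_0\bigl(\hat f(1)-\overline{z_0}^{\,2}\hat f(-1)\bigr)$ (with $z_0=e^{i\theta_0}$, $\eta=e^{i\phi(\theta_0)}$) as the integral of a function that is shown, by an elementary half-circle argument, to have nonnegative real part and to vanish only at two points. This yields $|\hat f(1)|+|\hat f(-1)|>0$ directly. Your harmonic-extension route could perhaps be salvaged by controlling all level sets of $u$ or by a different choice of auxiliary function, but as written the key lemma fails and the argument does not close.
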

Under the assumptions of this theorem, each of the individual coefficients $\hat{f}(1)$ and $\hat{f}(-1)$ may vanish~\cite[Proposition 5.1]{KY}.

A complex-valued function is called harmonic if its real and imaginary parts are harmonic. For any continuous function on $\mathbb{T}$, convolution with the Poisson kernel
\begin{equation}\label{def-Poisson-kernel}
P(z,\zeta)  = \frac{1-|z|^2}{|\zeta-z|^2}, \quad z\in \mathbb D, \ \zeta\in \mathbb T
\end{equation}
provides a harmonic extension in $\mathbb{D}$ \cite[Theorem 4.22]{Ah}. This fact is restated as a proposition for future references.

\begin{prop}\label{harmonic extension}\cite[Section 19.1]{Co} If $f:\ \mathbb{T}\rightarrow \mathbb{C}$ is continuous, then the series
\[
F(z)=\sum^{\infty}_{n=0}\hat{f}(n)z^n+\sum^{\infty}_{n=1}\hat{f}(-n)\overline{z}^n
\]
defines a harmonic function in $\mathbb{D}$, for which $f$ provides a continuous boundary extension.
\end{prop}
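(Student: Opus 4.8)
The plan is to recognize $F$ as the Poisson integral of $f$, and then to invoke (or reprove) the two classical facts that the Poisson integral of a continuous boundary datum is harmonic inside the disk and extends that datum continuously to the boundary. Concretely I would proceed in three steps: first establish local uniform convergence of the series and deduce harmonicity of $F$; second, rearrange the partial sums to identify $F(re^{i\theta})$ with the convolution of $f$ against the Poisson kernel $P(z,\zeta)$ of~\eqref{def-Poisson-kernel}; third, use the approximate-identity properties of that kernel to obtain the continuous boundary extension.

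For the first step, $f$ is continuous on the compact set $\mathbb{T}$, hence bounded, say $|f|\le M$, so $|\hat f(n)|\le M$ for every $n\in\mathbb{Z}$. For $|z|\le r<1$ the series is then dominated termwise by $M\sum_{n\ge 0}r^n+M\sum_{n\ge 1}r^n<\infty$, so by the Weierstrass $M$-test it converges uniformly on every closed subdisk of $\mathbb{D}$. Each summand $\hat f(n)z^n$ is holomorphic and each summand $\hat f(-n)\overline z^{\,n}$ is antiholomorphic, hence harmonic; a locally uniform limit of harmonic functions is harmonic (e.g.\ because the mean value property passes to the limit), so $F$ is harmonic in $\mathbb{D}$.

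For the second step, fix $z=re^{i\theta}$ with $r<1$. Inserting $\hat f(n)=\frac{1}{2\pi}\int_0^{2\pi}f(e^{it})e^{-int}\,dt$ and interchanging summation with integration — legitimate since the series in $t$ converges uniformly for this fixed $r$ — gives
\[
F(re^{i\theta})=\frac{1}{2\pi}\int_0^{2\pi}f(e^{it})\Bigl(\sum_{n\in\mathbb{Z}}r^{|n|}e^{in(\theta-t)}\Bigr)\,dt
=\frac{1}{2\pi}\int_0^{2\pi}f(e^{it})\,\frac{1-r^2}{1-2r\cos(\theta-t)+r^2}\,dt ,
\]
the last equality being the elementary summation of two geometric series. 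The kernel here is exactly $P(z,e^{it})$ in the notation of~\eqref{def-Poisson-kernel}, so $F$ is the Poisson integral of $f$. For the third step I would use that $P(z,\cdot)\ge 0$, that $\frac{1}{2\pi}\int_0^{2\pi}P(re^{i\theta},e^{it})\,dt=1$ (take $f\equiv 1$ above, or read it off the Fourier expansion), and that $\sup_{|t-\theta|\ge\delta}P(re^{i\theta},e^{it})\to 0$ as $r\to 1^-$ for every $\delta>0$. Given $\varepsilon>0$, uniform continuity of $f$ on $\mathbb{T}$ controls the contribution of $|t-\theta_0|<\delta$, while boundedness of $f$ together with the kernel estimate controls the rest once $r$ is near $1$ and $\theta$ near $\theta_0$; this standard $\varepsilon/3$ argument yields $F(re^{i\theta})\to f(e^{i\theta_0})$ as $re^{i\theta}\to e^{i\theta_0}$. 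Alternatively, steps two and three may simply be quoted from~\cite[Section 19.1]{Co} or~\cite[Theorem 4.22]{Ah}.

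There is no genuine obstacle, as the statement is classical; the only points requiring a word of care are the interchange of sum and integral in step two, justified by uniform convergence for fixed $r<1$, and the kernel decay estimate underpinning step three, which follows by a short direct computation from the explicit formula for $P$ in~\eqref{def-Poisson-kernel}.
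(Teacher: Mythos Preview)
Your proposal is correct and in fact goes beyond what the paper does: the paper offers no proof of this proposition at all, stating it purely as a citation to \cite[Section 19.1]{Co} (with the preceding sentence also pointing to \cite[Theorem 4.22]{Ah}). The three-step argument you sketch---local uniform convergence giving harmonicity, identification with the Poisson integral, and the approximate-identity argument for boundary continuity---is precisely the classical proof one finds in those references, so there is nothing to compare; your final remark that one ``may simply quote'' those sources is exactly what the paper does.
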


Theorem~\ref{starlike} has an implication for the harmonic extension of starlike embeddings.

\begin{cor}\label{extension} Suppose $f\colon \mathbb{T}\to \mathbb{C}$ is a starlike embedding. Let $F$ be the harmonic extension of $f$, that is the harmonic map $F\colon \mathbb D \to \mathbb C$ that agrees with $f$ on the boundary of $\mathbb D$. Then $F$ has nonvanishing total derivative, meaning that $|F_z| + |F_{\bar z}| > 0$ in $\mathbb D$.
\end{cor}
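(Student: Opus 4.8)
The plan is to reduce the corollary to Theorem~\ref{starlike} by precomposing $F$ with a conformal automorphism of $\mathbb D$. The observation that makes this work is that, for any continuous $g\colon\mathbb T\to\mathbb C$ with harmonic extension $G$, differentiating the series in Proposition~\ref{harmonic extension} term by term at the origin gives $G_z(0)=\hat g(1)$ and $G_{\bar z}(0)=\hat g(-1)$; so Theorem~\ref{starlike} is exactly the assertion that $|G_z(0)|+|G_{\bar z}(0)|>0$ for every starlike embedding $g$. It therefore suffices to move an arbitrary point $z_0\in\mathbb D$ to the origin by a change of variables that respects the starlike hypothesis.

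Fixing $z_0\in\mathbb D$, I would take $M(z)=\frac{z+z_0}{1+\overline{z_0}\,z}$, the disk automorphism with $M(0)=z_0$ and $M'(0)=1-|z_0|^2\neq 0$. Since $M$ is holomorphic, $F\circ M$ is harmonic on $\mathbb D$ and continuous on $\overline{\mathbb D}$, with boundary values $\tilde f:=f\circ M|_{\mathbb T}$; by uniqueness of the harmonic extension, $F\circ M$ is the harmonic extension of $\tilde f$. The first substantive step is to check that $\tilde f$ is again a starlike embedding with the same star center $w_0$: injectivity is inherited from the homeomorphism $M|_{\mathbb T}$, while $\theta\mapsto\arg(\tilde f(e^{i\theta})-w_0)=\arg\bigl(f(M(e^{i\theta}))-w_0\bigr)$ is monotone because $\theta\mapsto\arg M(e^{i\theta})$ is (a lift of) an increasing homeomorphism of the circle (conformal automorphisms preserve the orientation of $\mathbb T$), and composing a monotone function with an increasing reparametrization keeps it monotone — which also automatically covers the sense-reversing case. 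Hence Theorem~\ref{starlike} applies to $\tilde f$.

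It remains to translate the conclusion back. By the chain rule for the Wirtinger derivatives, using that $M$ is holomorphic (so $M_{\bar z}\equiv 0$ and $\overline{M}_{\bar z}=\overline{M'}$), one gets $(F\circ M)_z(0)=F_z(z_0)\,M'(0)$ and $(F\circ M)_{\bar z}(0)=F_{\bar z}(z_0)\,\overline{M'(0)}$. Therefore
\[
0<|(F\circ M)_z(0)|+|(F\circ M)_{\bar z}(0)|=|M'(0)|\bigl(|F_z(z_0)|+|F_{\bar z}(z_0)|\bigr),
\]
and dividing by $|M'(0)|>0$ gives $|F_z(z_0)|+|F_{\bar z}(z_0)|>0$; since $z_0$ was arbitrary, this is the claim. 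I do not expect a genuine obstacle here: the only points needing care are the verification that $M|_{\mathbb T}$ preserves the monotonicity of $\arg(f-w_0)$ (so $\tilde f$ stays a starlike embedding) and the bookkeeping of the two Wirtinger derivatives under the holomorphic change of variables, both of which are routine.
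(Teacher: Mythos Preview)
Your proposal is correct and follows essentially the same route as the paper: precompose with the M\"obius automorphism $M(z)=(z+z_0)/(1+\overline{z_0}z)$, identify $F\circ M$ as the harmonic extension of the starlike embedding $f\circ M$, apply Theorem~\ref{starlike} at the origin, and transfer back via the chain rule. You have in fact filled in two points the paper leaves implicit---the verification that $f\circ M$ remains a starlike embedding and the explicit Wirtinger chain-rule computation---so your argument is, if anything, more complete.
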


In contrast to the Rad\'o-Kneser-Choquet theorem, the harmonic map in Corollary~\ref{extension} need not be a diffeomorphism, see Example~\ref{folding}.

The circle homeomorphisms whose Fourier series terminates in one direction can be completely described in terms of finite Blaschke products. A Blaschke product of degree $n$ is a rational function of the form
\[
B(z) = \sigma \prod_{k=1}^n \frac{z-z_k}{1-\overline{z_k}z},
\]
where $z_1,\dots, z_n\in \mathbb{D}$ and $\sigma\in \mathbb T$. The book~\cite{GMR} is a convenient reference on the properties of these products.

\begin{lem}\label{terminating}\cite[Proposition 3.2]{KY} Suppose that $f:\mathbb{T}\rightarrow\mathbb{T}$ is a sense-preserving circle homeomorphism.
The set $\{n\in\mathbb{Z}\colon \hat{f}(n) \neq0\}$ is:

(a) bounded below if and only if
$f(\zeta)=B(\zeta)/\zeta^{n-1}$ for some integer $n\geq 0$, where
$B$ is a Blaschke product of degree $n$;

(b) bounded above if and only if
$f(\zeta)=\zeta^{n+1}/B(\zeta)$ for some integer $n\geq 0$, where
$B$ is a Blaschke product of degree $n$.
\end{lem}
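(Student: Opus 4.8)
The plan is to split each equivalence into an easy algebraic implication and a function-theoretic one. For the ``if'' part of (a), write $f(\zeta)=B(\zeta)/\zeta^{n-1}$ with $B$ a Blaschke product of degree $n$. Since $B$ is rational with all poles outside $\overline{\mathbb D}$, its restriction to $\mathbb T$ has Fourier support in $\{0,1,2,\dots\}$; on $\mathbb T$ we have $1/\zeta^{n-1}=\bar\zeta^{\,n-1}$, so multiplying shifts the support into $\{1-n,2-n,\dots\}$, which is bounded below. For the ``if'' part of (b), use that $1/B(\zeta)=\overline{B(\zeta)}$ on $\mathbb T$ has Fourier support in $\{0,-1,-2,\dots\}$, so $\zeta^{n+1}/B(\zeta)$ has support bounded above by $n+1$.

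For the ``only if'' part of (a), assume $\{n:\hat f(n)\ne 0\}\subseteq[m,\infty)$ for some $m\in\mathbb Z$. Then $g(\zeta):=\zeta^{-m}f(\zeta)$ is continuous on $\mathbb T$ with $\hat g(k)=\hat f(k+m)=0$ for $k<0$, so by Proposition~\ref{harmonic extension} its harmonic extension is the holomorphic function $G(z)=\sum_{k\ge 0}\hat g(k)z^k$, which lies in the disc algebra and satisfies $|G|=|g|=|f|=1$ on $\mathbb T$. I would then show that any such $G$ is a finite Blaschke product: $G\not\equiv 0$; its zeros cannot accumulate in $\mathbb D$ (analyticity) nor on $\mathbb T$ (continuity would force $|G|=0$ there, contradicting $|G|=1$); hence $G$ has finitely many zeros $z_1,\dots,z_d$ counted with multiplicity, and dividing $G$ by the finite Blaschke product with exactly these zeros gives a zero-free disc-algebra function that is unimodular on $\mathbb T$, hence a unimodular constant by the maximum and minimum modulus principles. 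Thus $G=B$ is a Blaschke product of degree $d$ and $f(\zeta)=\zeta^m B(\zeta)$ on $\mathbb T$.

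It remains to identify $m$, which I would do by a winding-number count about $0$: a sense-preserving homeomorphism of $\mathbb T$ has winding number $1$, the map $\zeta\mapsto\zeta^m$ has winding number $m$, and a Blaschke product of degree $d$ has winding number $d$ (argument principle); since the winding number is additive over products of nowhere-zero maps, $1=m+d$, so $m=1-d$ and $f(\zeta)=B(\zeta)/\zeta^{d-1}$ with $n:=d\ge 0$. Part (b) runs along the same lines applied to $h(\zeta):=\zeta^M\overline{f(\zeta)}=\zeta^M/f(\zeta)$ when $\{n:\hat f(n)\ne 0\}\subseteq(-\infty,M]$: one checks $\hat h(k)=\overline{\hat f(M-k)}=0$ for $k<0$, so $h$ extends to a Blaschke product $B$ of some degree $d$, giving $f(\zeta)=\zeta^M/B(\zeta)$, and the winding count now reads $1=M-d$, i.e.\ $M=d+1=:n+1$. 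The only substantial step is the classification of disc-algebra functions with unimodular boundary values as finite Blaschke products; the rest is bookkeeping, the main pitfall being the signs and the possibly negative shift exponents $m$ and $M$.
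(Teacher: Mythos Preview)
The paper does not prove this lemma; it is quoted verbatim from \cite[Proposition~3.2]{KY} and used as a black box, so there is no in-paper argument to compare against. Your proposal is a correct self-contained proof and follows what is essentially the standard route: reduce to the classical fact that a function in the disc algebra with unimodular boundary values is a finite Blaschke product, and then pin down the degree via a winding-number (argument-principle) count. The bookkeeping with the shifts $m$ and $M$ is handled correctly, including the conjugation trick $\zeta^M\overline{f(\zeta)}=\zeta^M/f(\zeta)$ for part~(b), and the computation $\hat h(k)=\overline{\hat f(M-k)}$ is right. One small remark: you allow $m$ to be any lower bound for the support, not the sharp one, so the resulting Blaschke product $B$ may pick up extra factors of $\zeta$; the identity $m+d=1$ from the winding count absorbs this ambiguity and still yields the stated form $f(\zeta)=B(\zeta)/\zeta^{d-1}$, so nothing is lost.
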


Not every Blaschke product $B$ induces a circle homeomorphism in the way described in Lemma~\ref{terminating}. To treat the two cases of Lemma~\ref{terminating} in a unified way, Kovalev and Yang gave the necessary and sufficient condition  for the quotient of two finite Blaschke
products to be a circle homeomorphism, which is recorded in the following.

\begin{lem}\label{homeomorphism condition}\cite[Lemma 3.1]{KY} Suppose that $B_1, B_2$ are finite
Blaschke products:
\[
B_1(z)=\sigma_1\prod_{k=1}^n\frac{z-z_k}{1-\overline{z_k}z}, \quad
B_2(z)=\sigma_2\prod_{k=1}^m\frac{z-w_k}{1-\overline{w_k}z},
\]where
$z_1, z_2,\dots, z_n, w_1, w_2,\dots, w_m\in\mathbb{D}$ and $\sigma_1,
\sigma_2\in\mathbb{T}$. Then the quotient $B_1(\zeta)/B_2(\zeta)$ is
a sense-preserving circle homeomorphism if and only if $n-m=1$ and
\begin{equation}\label{poisson kernel condition}
\sum^n_{k=1}P(z_k, \zeta)\geq\sum^m_{k=1}P(w_k,
\zeta)\;\ \mbox{for\;\ all}\;\ \zeta\in\mathbb{T},
\end{equation}
where $P$ is the Poisson kernel for the unit disk $\mathbb{D}$, see~\eqref{def-Poisson-kernel}.
\end{lem}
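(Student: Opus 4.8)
The plan is to reduce the lemma to one computation: for a finite Blaschke product $B(z)=\sigma\prod_{k=1}^{n}\frac{z-z_k}{1-\overline{z_k}z}$ one has the identity
\[
\frac{\zeta B'(\zeta)}{B(\zeta)}=\sum_{k=1}^{n}P(z_k,\zeta),\qquad \zeta\in\mathbb{T}.
\]
I would obtain this by logarithmic differentiation, $\frac{B'(z)}{B(z)}=\sum_{k}\bigl(\frac{1}{z-z_k}+\frac{\overline{z_k}}{1-\overline{z_k}z}\bigr)$, multiplying by $z=\zeta$ and using $\bar\zeta=1/\zeta$ to collapse each term to $\frac{1-|z_k|^{2}}{|\zeta-z_k|^{2}}=P(z_k,\zeta)$. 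Since this sum is real and positive, writing $\zeta=e^{i\theta}$ gives $\frac{d}{d\theta}\arg B(e^{i\theta})=\re\frac{\zeta B'(\zeta)}{B(\zeta)}=\sum_{k}P(z_k,\zeta)$ (and $\frac{d}{d\theta}\log|B(e^{i\theta})|=0$, consistent with $|B|\equiv1$ on $\mathbb{T}$). Set $g=B_1/B_2$; because all zeros of $B_1$ and $B_2$ lie in $\mathbb{D}$, $g$ is real-analytic and $\mathbb{T}$-valued on $\mathbb{T}$, with no zeros or poles there. Fix a continuous branch $h(\theta)$ of $\arg g(e^{i\theta})$; then
\[
h'(\theta)=\sum_{k=1}^{n}P(z_k,\zeta)-\sum_{k=1}^{m}P(w_k,\zeta),\qquad \zeta=e^{i\theta},
\]
so condition~\eqref{poisson kernel condition} is exactly the assertion that $h$ is non-decreasing.

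Next I would fix the degree. Integrating the identity over a period and using $\frac{1}{2\pi}\int_{0}^{2\pi}P(z_k,e^{i\theta})\,d\theta=1$ shows $h(\theta+2\pi)-h(\theta)=2\pi(n-m)$, i.e.\ the winding number of $g|_{\mathbb{T}}$ is $n-m$. A sense-preserving circle homeomorphism has winding number $+1$, so in both directions of the lemma we get the necessity of $n-m=1$.

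It remains to match \eqref{poisson kernel condition} with the homeomorphism property, given $n-m=1$. If $g|_{\mathbb{T}}$ is a sense-preserving circle homeomorphism, then $h$ lifts an orientation-preserving self-map of $\mathbb{T}$ with $h(\theta+2\pi)=h(\theta)+2\pi$, hence $h$ is strictly increasing; being differentiable ($g$ is real-analytic and non-vanishing on $\mathbb{T}$) it satisfies $h'\ge0$ everywhere, which is \eqref{poisson kernel condition}. Conversely, if \eqref{poisson kernel condition} holds then $h'\ge0$; moreover $h'$ is real-analytic in $\theta$ with mean value $n-m=1$ over a period, so $h'\not\equiv0$ and its zeros are isolated. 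Thus $h$ is strictly increasing with $h(\theta+2\pi)=h(\theta)+2\pi$, so $\theta\mapsto e^{ih(\theta)}=g(e^{i\theta})$ descends to an orientation-preserving self-homeomorphism of $\mathbb{T}$; that is, $g|_{\mathbb{T}}$ is a sense-preserving circle homeomorphism.

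I expect the sufficiency direction to be the main point of care: since \eqref{poisson kernel condition} allows equality, $h'$ may vanish, and one must rule out $h$ being constant on an arc (which would destroy injectivity). This is precisely where the real-analyticity of the Poisson sums, together with the normalization that their difference has mean $n-m=1>0$, is used. The logarithmic-derivative identity and the winding-number bookkeeping are routine, but it is worth recording explicitly that $g$ has no poles on $\mathbb{T}$ so that these arguments apply.
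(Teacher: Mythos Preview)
The paper does not give its own proof of this lemma; it is quoted verbatim from \cite[Lemma~3.1]{KY} and used as a black box. So there is nothing in the present paper to compare your argument against.

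That said, your proposal is correct and is essentially the standard derivation (and, in all likelihood, the one in \cite{KY}). The identity $\zeta B'(\zeta)/B(\zeta)=\sum_k P(z_k,\zeta)$ on $\mathbb{T}$ is exactly the computation one expects, and from it the equivalence follows as you wrote: $h'(\theta)=\sum_k P(z_k,\zeta)-\sum_k P(w_k,\zeta)$, the winding number is $n-m$ by the mean-value-one property of the Poisson kernel, and monotonicity of the lift $h$ matches \eqref{poisson kernel condition}. Your handling of the borderline case---equality in \eqref{poisson kernel condition} on a set of positive measure---is also the right one: $h'$ is real-analytic with period mean $1$, hence its zero set is discrete, so $h$ is strictly increasing and descends to a genuine homeomorphism. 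There is no gap.
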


Because condition~\eqref{poisson kernel condition} is difficult to check in practice, in Section \ref{circle homeomorphism} we continue to consider the problem of describing the Blaschke products that induce circle homeomorphisms, and obtain some explicit sufficient conditions in terms of the Blaschke zeros. Moreover, in Section~\ref{the space of circle homeomorphisms} we prove that both sets of homeomorphisms described in Lemma~\ref{terminating} are connected, as is the larger set in Lemma~\ref{homeomorphism condition}. The connectedness is understood
in the topology of $C(\mathbb T)$ induced by the uniform norm.

Since replacing the function $f(e^{i\theta})$ with $f(e^{-i\theta})$, which reverses the orientation of $f$, only changes the indexing of its Fourier coefficients, throughout the rest of this paper we will assume that $f$ is sense-preserving.

\section{Fourier coefficients of starlike embeddings}\label{starlike embeddings}

\begin{proof}[Proof of Theorem~\ref{starlike}] Suppose that the curve $f(\mathbb T)$ is star-shaped about some point
$w_0$. Without loss of generality, we may assume that $w_0=0$. Write
$f(e^{i\theta})=R(\theta)e^{i\phi(\theta)}$, where $\phi$ is
non-decreasing and \[\phi(2\pi-)-\phi(0)=2\pi.\]

Since $f$ is an embedding, the function $\phi$ is continuous on $[0, 2\pi)$.
Therefore the function $\psi(\theta): = \phi(\theta+\pi)-\phi(\theta)-\pi$ is continuous on $[0, \pi)$. Since $\psi(0) + \psi(\pi-) = 0$, the intermediate value theorem implies that there exists $\theta_0\in [0, \pi)$ such that $\psi(\theta_0)=0$, that is
\[
\phi(\theta_0+\pi)=\phi(\theta_0)+\pi.
\]
Let $z_0=e^{i\theta_0}$ and $g(e^{i\theta})=e^{i\phi(\theta)}$. Then
we have $f(e^{i\theta})=R(\theta)g(e^{i\theta})$ and
\[
g(-z_0)=g(e^{i(\theta_0+\pi)})=e^{i\phi(\theta_0+\pi)}=e^{i(\phi(\theta_0)+\pi)}=-e^{i\phi(\theta_0)}=-g(e^{i\theta_0})=-g(z_0).
\]
Let $\eta=g(z_0)$ and $\gamma$ be the open half-circle traced counterclockwise
from the point $z_0$ to $-z_0$. Note that $\mathbb T\setminus \overline{\gamma}$ is a complementary open half-circle traced counterclockwise
from $-z_0$ to $z_0$. Then $\overline{\eta}g(e^{i\theta})$
maps $\gamma$ onto the upper half-circle and $\mathbb T\setminus \overline{\gamma}$ onto the lower half-circle, for $g$ is sense-preserving.

Let
\[h(e^{i\theta})=\frac{\overline{z_0}e^{i\theta}}{1-\overline{z_0}^2e^{2i\theta}}.\]
Then $h$ maps $\gamma$ onto the half-line starting from the point
$\frac{1}{2}i$ to infinity along the upper imaginary axis, and
$\mathbb T\setminus \overline{\gamma}$ onto the half-line starting from the point
$-\frac{1}{2}i$ to infinity along the lower imaginary axis. Then
$\frac{1}{h(e^{i\theta})}=(1-\overline{z_0}^2e^{2i\theta})e^{-i\theta}z_0$
maps $\gamma$ onto the segment $(0, -2i]$ and $\mathbb{T}\setminus \gamma$
onto the segment $(0, 2i]$. In addition, $1/h(\pm z_0) = 0$.

The above statements yield that for all $e^{i\theta}\in\mathbb{T}$,
we have
\[\re\left(\overline{\eta}g(e^{i\theta})\frac{1}{h(e^{i\theta})}\right)=
\re\left(z_0\overline{\eta}g(e^{i\theta})(1-\overline{z_0}^2e^{2i\theta})e^{-i\theta}\right)\geq0,\]
where the equality holds if and only if $e^{i\theta}\in\{z_0,
-z_0\}.$

We now consider
\[\overline{\eta}z_0\left(\hat{f}(1)-\overline{z_0}^2\hat{f}(-1)\right)=\frac{1}{2\pi}\int^{2\pi}_0 R(\theta)
\overline{\eta}g(e^{i\theta})\frac{1}{h(e^{i\theta})}d\theta, \]
which implies that
\[|\hat{f}(1)|+|\hat{f}(-1)|\geq
\re\left(\overline{\eta}z_0\left(\hat{f}(1)-\overline{z_0}^2\hat{f}(-1)\right)\right)=\frac{1}{2\pi}\int^{2\pi}_0
R(\theta)\re\left(\overline{\eta}g(e^{i\theta})\frac{1}{h(e^{i\theta})}\right)d\theta>0.\]
Therefore, the proof is completed.
\end{proof}

\begin{proof}[Proof of Corollary~\ref{extension}] Since $F$ is the harmonic extension of $f$, by Proposition~\ref{harmonic extension} we have $F_z(0)=\hat f(1)$ and $F_{\bar z}(0) = \hat f(-1)$. By Theorem~\ref{starlike}, $|F_z(0)|+|F_{\bar z}(0)| > 0$. Given any point $a\in \mathbb D$, consider the M\"obius transformation $g(z) = \frac{z+a}{1+\bar a z}$. The composition $F\circ g$ is a harmonic map  with boundary values $f\circ g$. By the above, the total derivative of $F\circ g$ at $0$ does not vanish. Since $g(0)=a$, the chain rule shows that $|F_z(a)| + |F_{\bar z}(a)| > 0$.
\end{proof}

\begin{example}\label{folding} The embedding $f\colon \mathbb T\to \mathbb C$, defined by
\[
f(e^{i\theta}) = e^{i\theta} + e^{2i\theta} + \frac{1}{2}e^{-2i\theta}
\]
is starlike with respect to the point $w=1$. Its harmonic extension $F(z) = z+z^2+\bar z^2/2$ has Jacobian of variable sign in $\mathbb D$: in particular, $F_z$ vanishes at $-1/2$ while $F_{\bar z}$ vanishes at $0$. Therefore, $F$ is not a diffeomorphism.

For the sake of completeness we show that $f$ is indeed starlike with respect to $1$. Its values on $\mathbb T$ agree with the rational function $h(z) = z^2 + z  +  z^{-2}/2$. Nevanlinna's criterion for starlikeness~\cite[Theorem 8.3.1]{Goo} requires $zh'(z)/(h(z)-1)$ to have nonnegative real part on $\mathbb T$. We have
\[
\re\frac{zh'(z)}{h(z)-1}
= \re \frac{2z^2+z-z^{-2}}{z^2 + z + z^{-2}/2 -1}.
\]
Multiplying the numerator of this fraction by the conjugate of its denominator and writing $z=e^{i\theta}$, we arrive at
\[
\re\left\{(2z^2+z-z^{-2})(z^{-2} + z^{-1} + z^{2}/2 -1)\right\}
= \frac{5}{2} + 2\cos \theta - \cos 2\theta - \frac{1}{2}\cos 3\theta.
\]
The latter expression factors as $(5-2\cos 2\theta)\cos^2(\theta/2)$ and is therefore nonnegative.
\end{example}

\section{The set of rational circle homeomorphisms is connected}\label{the space of circle homeomorphisms}

The space $C(\mathbb T)$ of continuous mappings from $\mathbb T$ into $\mathbb C$ is equipped with the topology induced by the norm $\|f\| = \sup_{\mathbb T} |f|$. Let $H_+(\mathbb T)\subset C(\mathbb T)$ denote the group of all sense-preserving circle homeomorphisms $f\colon \mathbb T\to\mathbb T$. The group $H_+(\mathbb T)$ contains the rotation group $SO(2, \mathbb R)$ which consists of the maps $z\mapsto \sigma z$, $\sigma\in \mathbb T$. Proposition~4.2 in~\cite{Ghys} shows that $SO(2, \mathbb R)$ is a deformation retract of $H_+(\mathbb T)$, meaning there exists a continuous map $F\colon H_+(\mathbb T)\times [0, 1]\to H_+(\mathbb T)$ such that $F(\cdot, 1)$ is the identity and $F(\cdot, 0)$ is a retraction from $H_+(\mathbb T)$ onto $SO(2, \mathbb R)$. In particular, $H_+(\mathbb T)$ is a connected set. We will show that similar results hold for certain subsets of $H_+(\mathbb T)$ that consist of rational functions.

\begin{thm}\label{space of special circle homeomorphism} For each $n\geq 1$, the sets
\[
H_{n 1}(\mathbb T) = H_+(\mathbb T)\cap \left\{B(\zeta)/\zeta^{n-1}   \colon \text{$B$ is a Blaschke product of degree $n$}\right\}
\]
and
\[
H_{n 2}(\mathbb T) = H_+(\mathbb T)\cap \left\{\zeta^{n+1}/B(\zeta)  \colon \text{$B$ is a Blaschke product of degree $n$}\right\}
\]
contain $SO(2, \mathbb R)$ as a deformation retract. In particular, both sets are  connected. The unions $\bigcup_{n=1}^\infty H_{n 1}(\mathbb T)$ and $\bigcup_{n=1}^\infty H_{n 2}(\mathbb T)$ are connected as well.
\end{thm}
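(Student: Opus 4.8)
## Proof proposal

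The plan is to build an explicit deformation retraction of $H_{n1}(\mathbb T)$ onto $SO(2,\mathbb R)$ by moving the Blaschke zeros of $B$ toward the origin along straight-line paths, and to argue that the defining inequality of Lemma~\ref{homeomorphism condition} is preserved throughout this motion. Write $f = B/\zeta^{n-1}$ with $B(z) = \sigma\prod_{k=1}^n \frac{z-z_k}{1-\overline{z_k}z}$; in the notation of Lemma~\ref{homeomorphism condition} this is the quotient $B_1/B_2$ with $B_2(z) = z^{n-1}$, i.e.\ with $w_1 = \dots = w_{n-1} = 0$, so membership in $H_{n1}(\mathbb T)$ is equivalent to $\sum_{k=1}^n P(z_k,\zeta) \ge (n-1)P(0,\zeta) = n-1$ for all $\zeta\in\mathbb T$. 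The crucial observation is that for fixed $\zeta$ the function $z\mapsto P(z,\zeta) = \frac{1-|z|^2}{|\zeta-z|^2}$ is \emph{superharmonic} on $\mathbb D$ (it is a positive harmonic function of $\zeta$, but as a function of $z$ it is the Poisson kernel, which is superharmonic), and more to the point $P(tz,\zeta)$ is a decreasing function of $t\in[0,1]$? No --- that is false near $\zeta$. Instead I would use that $t\mapsto P(tz,\zeta)$ satisfies $P(tz,\zeta)\ge \min\{P(z,\zeta),1\}$ is also not automatic. So the honest approach: define $F(f,t)$ by replacing each zero $z_k$ with $t z_k$, and prove directly that $g_t(\zeta) := \sigma\prod_k \frac{\zeta - tz_k}{1-t\overline{z_k}\zeta}\big/\zeta^{n-1}$ is a sense-preserving circle homeomorphism for every $t\in[0,1]$.

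The key step, then, is to show that the inequality $\sum_k P(tz_k,\zeta)\ge n-1$ persists for all $t\in[0,1]$. I would prove the stronger pointwise statement that $P(tz,\zeta) \ge 1 - t + t\,P(z,\zeta)$ is \emph{not} what I want either; rather I expect $\sum_k P(tz_k,\zeta)$ to be, for fixed $\zeta$, a \emph{concave} function of $t$ on $[0,1]$ (each summand $t\mapsto \frac{1-t^2|z_k|^2}{|\zeta - tz_k|^2}$ has nonpositive second derivative --- a one-variable computation), hence it lies above the chord joining its value $n$ at $t=0$ and its value $\ge n-1$ at $t=1$, giving $\sum_k P(tz_k,\zeta)\ge n - t \ge n-1$. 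That is the cleanest route and the computation of the sign of the second derivative of $\frac{1-t^2 r^2}{1-2tr\cos\alpha + t^2 r^2}$ in $t$ is the one genuinely technical point. Once this is in hand, $F(f,t)=g_t$ is well-defined into $H_{n1}(\mathbb T)$; it is continuous in $(f,t)$ because the coefficients of the rational function depend continuously on the zeros (and the passage from the tuple of zeros to the function, and back via the zero set, is continuous on the relevant sets); $F(\cdot,1)$ is the identity; and $F(\cdot,0)$ sends $f$ to $\sigma\prod_k \zeta/\zeta^{n-1} = \sigma\zeta$, a rotation, which is the required retraction onto $SO(2,\mathbb R)$. The case of $H_{n2}(\mathbb T)$ is symmetric: there $B_1(z) = z^{n+1}$ and $B_2(z) = B(z)$, the inequality reads $n+1 \ge \sum_k P(w_k,\zeta)$, and shrinking the $w_k$ toward $0$ only decreases each $P(w_k,\zeta)$ on average in exactly the same concave fashion, so $\sum_k P(tw_k,\zeta)\le n \le n+1$ for $t\in[0,1]$; alternatively one observes $f\in H_{n2}(\mathbb T)$ iff $1/f \in H_{n1}(\mathbb T)$ (up to conjugating the variable), which transports the retraction.

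Finally, for the unions: each $H_{n1}(\mathbb T)$ is connected by the above and contains $SO(2,\mathbb R)$, so $\bigcup_{n\ge 1} H_{n1}(\mathbb T)$ is a union of connected sets with a common point (any fixed rotation), hence connected; likewise for $\bigcup_{n\ge 1} H_{n2}(\mathbb T)$. The main obstacle I anticipate is not the topology but verifying the concavity (equivalently the sign of the second $t$-derivative) of the individual Poisson-kernel terms along the radial contraction $z_k\mapsto tz_k$; if that concavity should fail for some configuration I would fall back to a direct estimate showing $\frac{1-t^2r^2}{|\zeta-tz|^2}\ge \frac{1-r^2}{|\zeta-z|^2}$ whenever the latter is $\le 1$, which is the only regime that matters since we only need to preserve an inequality whose right-hand side is $n-1 = (n-1)\cdot 1$, and summands that are already $\ge 1$ can only help. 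A secondary, more bookkeeping-style obstacle is checking that $F$ is jointly continuous: one must argue that a circle homeomorphism in $H_{n1}(\mathbb T)$ determines its Blaschke zeros continuously, which follows from continuity of roots of polynomials together with the fact that on $H_{n1}(\mathbb T)$ the numerator has exactly $n$ zeros in $\mathbb D$ counted with multiplicity.
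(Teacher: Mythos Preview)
Your overall strategy---shrinking each Blaschke zero $z_k$ radially to the origin via $z_k\mapsto tz_k$ and checking that the Poisson-kernel inequality from Lemma~\ref{homeomorphism condition} persists---is exactly the homotopy the paper uses. The divergence is in how that persistence is verified, and here your argument has a genuine gap.

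Your primary claim, concavity of $t\mapsto P(tz_k,\zeta)$, is false. When $\zeta$ and $z_k$ have the same argument one gets $P(tz_k,\zeta)=\dfrac{1+t|z_k|}{1-t|z_k|}$, whose second $t$-derivative is $\dfrac{4|z_k|^2}{(1-t|z_k|)^3}>0$; the function is strictly convex there, so the chord argument collapses.

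Your fallback is also insufficient as stated. The estimate $P(tz,\zeta)\ge\min\{P(z,\zeta),1\}$ is correct, but combining it with the \emph{pointwise} hypothesis $\sum_k P(z_k,\zeta)\ge n-1$ at that same $\zeta$ does not give $\sum_k P(tz_k,\zeta)\ge n-1$: already for $n=3$, at a given $\zeta$ one term can be large and the other two can be near $0$, so that $\sum_k\min\{P(z_k,\zeta),1\}<2$. The fallback discards precisely the excess in the large summand that makes the hypothesis hold, and it never invokes the constraint at any other point of $\mathbb T$.

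The paper closes this gap with the semigroup property of the Poisson kernel:
\[
P(tz_k,\zeta)=\int_{\mathbb T} P(z_k,\xi)\,P(t,\zeta/\xi)\,\frac{|d\xi|}{2\pi}.
\]
Summing over $k$ and using the \emph{global} hypothesis $\sum_k P(z_k,\xi)\ge n-1$ inside the integral yields $\sum_k P(tz_k,\zeta)\ge(n-1)\int_{\mathbb T}P(t,\zeta/\xi)\,\frac{|d\xi|}{2\pi}=n-1$. This is the missing idea: the shrunken sum at $\zeta$ is a Poisson average of the original sum over all of $\mathbb T$, which is exactly why a one-point comparison cannot work. With this step in place, your treatment of $H_{n2}$, of joint continuity, and of the unions goes through as you outline.
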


The proof is based on the semigroup property of the Poisson kernel $P$, namely $P_{r\rho} = P_r * P_\rho$ for $r, \rho \in [0, 1)$.

\begin{lem} \label{t circle homeomorphism} Let
\[
B(t, \zeta)=\sigma\prod_{k=1}^n \frac{\zeta-tz_k}{1-\overline{tz_k}\zeta},
\]
where $z_1, z_2, \dots, z_n\in\mathbb{D}$, $0\leq t\leq 1$ and
$\sigma\in\mathbb{T}$.

(a) If $B(1, \zeta)/\zeta^{n-1}$ is a circle
homeomorphism, then for any $t\in [0,1)$, $B(t, \zeta)/\zeta^{n-1}$
is still a circle homeomorphism.

(b) If $\zeta^{n+1}/B(1, \zeta)$ is a circle
homeomorphism, then for any $t\in [0,1)$, $\zeta^{n+1}/B(t, \zeta)$
is still a circle homeomorphism.
\end{lem}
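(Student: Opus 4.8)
The plan is to reduce both statements to the Poisson-kernel inequality of Lemma~\ref{homeomorphism condition} and then to observe that this inequality is stable under the contraction $z_k\mapsto tz_k$, thanks to the semigroup property $P_{r\rho}=P_r*P_\rho$ of the Poisson kernel.

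First I would rephrase the hypotheses and conclusions in terms of Lemma~\ref{homeomorphism condition}. In case (a), $B(t,\zeta)/\zeta^{n-1}$ is the quotient of the degree-$n$ Blaschke product $B(t,\cdot)$, whose zeros are $tz_1,\dots,tz_n$, by the degree-$(n-1)$ Blaschke product $\zeta^{n-1}$, which has a zero of multiplicity $n-1$ at the origin. Since $n-(n-1)=1$ and $P(0,\zeta)=1$, Lemma~\ref{homeomorphism condition} says that $B(t,\zeta)/\zeta^{n-1}$ is a sense-preserving circle homeomorphism precisely when $\sum_{k=1}^n P(tz_k,\zeta)\ge n-1$ for every $\zeta\in\mathbb T$; the hypothesis of (a) is the $t=1$ case of this. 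Similarly, in case (b) the map $\zeta^{n+1}/B(t,\zeta)$ is the quotient of $\zeta^{n+1}$ (a zero of multiplicity $n+1$ at the origin) by $B(t,\cdot)$, with $(n+1)-n=1$, and Lemma~\ref{homeomorphism condition} now requires $\sum_{k=1}^n P(tz_k,\zeta)\le n+1$ for every $\zeta\in\mathbb T$, the hypothesis of (b) again being the case $t=1$. So it suffices to show that the one-sided bound $\sum_{k=1}^n P(z_k,\zeta)\ge n-1$ (resp.\ $\le n+1$), valid for all $\zeta\in\mathbb T$, implies the same bound with each $z_k$ replaced by $tz_k$.

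The heart of the argument is the identity
\[
\sum_{k=1}^n P(tz_k,\zeta)=\frac{1}{2\pi}\int_{-\pi}^{\pi}P_t(\psi)\,\sum_{k=1}^n P\!\left(z_k,e^{-i\psi}\zeta\right)d\psi,\qquad \zeta\in\mathbb T,\ t\in[0,1).
\]
To establish it I would write $\zeta=e^{i\theta}$ and $z_k=|z_k|e^{i\alpha_k}$, note that $P(tz_k,e^{i\theta})=P_{t|z_k|}(\theta-\alpha_k)$ in the radial notation, and apply $P_{t|z_k|}=P_t*P_{|z_k|}$ to get $P(tz_k,e^{i\theta})=\frac{1}{2\pi}\int_{-\pi}^{\pi}P_t(\psi)P(z_k,e^{i(\theta-\psi)})\,d\psi$; summing over $k$ gives the identity. (Alternatively, $w\mapsto P(w,\zeta)=\re\frac{\zeta+w}{\zeta-w}$ is harmonic on $\mathbb D$, so convolving its restriction to the circle $|w|=|z_k|$ with the Poisson kernel $P_t$ recovers its value at $tz_k$.) Because $P_t\ge 0$ and $\frac{1}{2\pi}\int_{-\pi}^{\pi}P_t(\psi)\,d\psi=1$, the right-hand side is a weighted average of the values of $\zeta'\mapsto\sum_{k=1}^n P(z_k,\zeta')$ over $\mathbb T$, so it inherits any uniform lower or upper bound of that function; combined with the two reductions above, this proves (a) and (b) simultaneously.

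The step I expect to need the most care is this core identity, specifically checking that for $t\in[0,1)$ the kernel $P_t$ is genuinely nonnegative with total mass $1$ and that the convolution manipulation is valid for $t$ arbitrarily close to $1$. This causes no real difficulty, since the $z_k$ lie in $\mathbb D$, so all evaluations take place on circles of radius $|z_k|<1$, where $w\mapsto P(w,\zeta)$ is harmonic and the relevant Fourier series converge absolutely. The remaining points — reading $\zeta^{n-1}$ and $\zeta^{n+1}$ as Blaschke products with a repeated zero at $0$, and bookkeeping the index shifts in Lemma~\ref{homeomorphism condition} — are routine.
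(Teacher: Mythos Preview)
Your proposal is correct and follows essentially the same approach as the paper: reduce via Lemma~\ref{homeomorphism condition} to the Poisson-kernel inequalities $\sum_k P(z_k,\zeta)\ge n-1$ (resp.\ $\le n+1$), then use the semigroup/convolution identity $P(tz_k,\zeta)=\int_{\mathbb T}P(z_k,\xi)P(t,\zeta/\xi)\,\frac{|d\xi|}{2\pi}$ together with the positivity and unit mass of the Poisson kernel to transfer the bound from $z_k$ to $tz_k$. Your write-up is somewhat more detailed about the bookkeeping and the derivation of the convolution identity, but the argument is the same.
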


\begin{proof} In part (a), by Lemma \ref{homeomorphism condition} we have
\begin{equation}\label{Poisson-sum-1}
    \sum_{k=1}^n P(z_k, \zeta) \ge n-1, \quad \zeta\in \mathbb T
\end{equation}
and the goal is to show that for any $t\in [0,1)$,
\begin{equation}\label{Poisson-sum-2}
    \sum_{k=1}^n P(tz_k, \zeta) \ge n-1, \quad \zeta\in \mathbb T.
\end{equation}
By the semigroup property of $P$,
\begin{equation}\label{convolution-sum}
\begin{split}
\sum_{k=1}^n P(tz_k, \zeta)
&= \int_{\mathbb T} \sum_{k=1}^n P(z_k, \xi) P(t, \zeta/\xi) \frac{|d\xi|}{2\pi} \\
&\ge \int_{\mathbb T} (n-1) P(t, \zeta/\xi) \frac{|d\xi|}{2\pi} = n-1.
\end{split}
\end{equation}
The proof of part (b) follows the same process except the inequalities \eqref{Poisson-sum-1} and \eqref{Poisson-sum-2} are changed from $\cdots\ge n-1$ to $\cdots\le n+1$.
\end{proof}

\begin{proof}[Proof of Theorem~\ref{space of special circle homeomorphism}]
We introduce a homotopy $F\colon H_{n1}(\mathbb T)\times [0, 1]\to H_{n1}(\mathbb T)$ using the notation of Lemma~\ref{t circle homeomorphism}: if  $\psi(\zeta) = B(1, \zeta)/\zeta^{n-1}$, then
$F(\psi, t)$ is the map $\zeta \mapsto B(t, \zeta)/\zeta^{n-1}$. In view of Lemma~\ref{t circle homeomorphism} and the fact that $B(0, \zeta)/\zeta^{n-1}$ is a rotation, the map $F$ is indeed a deformation retraction onto $SO(2, \mathbb R)$. In particular, each set $H_{n1}(\mathbb T)$ is connected. Since all these sets contain the identity map, their union is connected as well. The same reasoning applies to $H_{n 2}(\mathbb T)$.
\end{proof}

We have a similar result for the larger set of all circle homeomorphisms that are restrictions of rational functions. Recall that if a rational function maps $\mathbb T$ into $\mathbb T$, it must be a quotient of Blaschke products~\cite[Corollary 3.5.4]{GMR}.

\begin{thm}\label{space of circle homeomorphism} The set of all circle
homeomorphisms of the form $B_1/B_2$, where $B_1$, $B_2$ are finite
Blaschke products, is connected.
\end{thm}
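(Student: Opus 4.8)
The plan is to show that every element of the set can be joined, by a path lying inside the set, to a rotation $\zeta\mapsto\sigma\zeta$, and then to invoke the connectedness of the rotation group $SO(2,\mathbb R)$. I would deliberately not try to imitate the deformation-retraction argument of Theorem~\ref{space of special circle homeomorphism}, for reasons explained in the last paragraph.

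So, let $f$ be a circle homeomorphism of the stated form and write $f=B_1/B_2$ with $B_1,B_2$ as in Lemma~\ref{homeomorphism condition}; thus $n-m=1$ and $\sum_{k=1}^nP(z_k,\zeta)\ge\sum_{k=1}^mP(w_k,\zeta)$ for every $\zeta\in\mathbb T$. Following the notation of Lemma~\ref{t circle homeomorphism}, set $B_1(t,\zeta)=\sigma_1\prod_{k=1}^n\frac{\zeta-tz_k}{1-\overline{tz_k}\zeta}$, $B_2(t,\zeta)=\sigma_2\prod_{k=1}^m\frac{\zeta-tw_k}{1-\overline{tw_k}\zeta}$, and $h_t=B_1(t,\cdot)/B_2(t,\cdot)$, so that $h_1=f$ while $h_0(\zeta)=(\sigma_1/\sigma_2)\zeta\in SO(2,\mathbb R)$ (here one uses $n-m=1$).

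The crux is that $h_t$ again belongs to the set for every $t\in[0,1]$. Its numerator and denominator are Blaschke products of degrees $n$ and $m$ with $n-m=1$, so by Lemma~\ref{homeomorphism condition} it suffices to verify $\sum_{k=1}^nP(tz_k,\zeta)\ge\sum_{k=1}^mP(tw_k,\zeta)$ on $\mathbb T$. This is done exactly as in the proof of Lemma~\ref{t circle homeomorphism}, now applied to the \emph{difference} of the two sums: by the semigroup property of the Poisson kernel, $\sum_{k=1}^nP(tz_k,\zeta)-\sum_{k=1}^mP(tw_k,\zeta)=\int_{\mathbb T}\bigl(\sum_{k=1}^nP(z_k,\xi)-\sum_{k=1}^mP(w_k,\xi)\bigr)P(t,\zeta/\xi)\,\frac{|d\xi|}{2\pi}$, which is $\ge0$ because $P(t,\cdot)\ge0$ and the first factor of the integrand is $\ge0$ by hypothesis. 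One also checks that $t\mapsto h_t$ is continuous from $[0,1]$ into $C(\mathbb T)$: each factor $\frac{\zeta-tz_k}{1-\overline{tz_k}\zeta}$ varies continuously with $t$, uniformly in $\zeta\in\mathbb T$, since the points $tz_k$ stay in the closed disk of radius $|z_k|<1$ and hence the denominators stay bounded away from $0$; finite products and the quotient — whose denominator is unimodular on $\mathbb T$ — inherit this. Thus $t\mapsto h_t$ is a path in the set from $f$ to a rotation.

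Since $SO(2,\mathbb R)$ is contained in the set (take $B_1(\zeta)=\sigma\zeta$ and $B_2\equiv1$) and is connected, and every point of the set is joined to $SO(2,\mathbb R)$ by such a path, the set is connected — in fact path-connected. I expect the only real subtlety to be the temptation to mimic the deformation-retraction argument of Theorem~\ref{space of special circle homeomorphism}: here the representation $f=B_1/B_2$ is not unique, common zeros of $B_1$ and $B_2$ may be cancelled, and the map sending $f$ to its reduced representation is not continuous, because the degree can blow up as one approaches the boundary of the set. Connecting each point separately to $SO(2,\mathbb R)$ by the homotopy above sidesteps this issue entirely.
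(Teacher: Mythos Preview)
Your proposal is correct and follows essentially the same approach as the paper: connect each rational circle homeomorphism to a rotation via the path $t\mapsto B_1(t,\cdot)/B_2(t,\cdot)$, verifying the Poisson-kernel inequality for the scaled zeros by applying the semigroup property to the \emph{difference} of the two sums, exactly as you do. Your explicit continuity check and the remarks on why a global deformation retraction is problematic are additions the paper omits, but the core argument is the same.
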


\begin{proof} We will prove that each circle homeomorphism of the form
$B_1/B_2$, where $B_1$, $B_2$ are finite
Blaschke products, can be connected to
a linear map $z\mapsto \sigma z$. By Lemma \ref{homeomorphism condition}, we may assume that
\[ B_1(z)=\sigma_1\prod_{k=1}^n\frac{z-z_k}{1-\overline{z_k}z},\quad
B_2(z)=\sigma_2\prod_{k=1}^{n-1}\frac{z-w_k}{1-\overline{w_k}z}, \]
where $z_1, z_2,\dots, z_n, w_1, w_2,\dots, w_{n-1}\in\mathbb{D}$ and
$\sigma_1, \sigma_2\in\mathbb{T}$, and we have
\begin{equation}\label{Poisson-sum-2b}
 \sum^n_{k=1}P(z_k, \zeta)\geq\sum^{n-1}_{k=1}P(w_k,
\zeta)\;\ \mbox{for\;\ all}\;\ \zeta\in\mathbb{T}.\end{equation}
It suffices to prove that for all $t\in[0,1)$,
\[
 \sum^n_{k=1}P(tz_k, \zeta)\geq\sum^{n-1}_{k=1}P(tw_k,
\zeta)\;\ \mbox{for\;\ all}\;\ \zeta\in\mathbb{T}. \]
By similar
reasoning as in the proof of Lemma \ref{t circle homeomorphism}, we
have
\[ \begin{split}
\sum^n_{k=1}P(tz_k, \zeta)-\sum^{n-1}_{k=1}P(tw_k,
\zeta) & = \int_{\mathbb T} \sum_{k=1}^n P(z_k, \xi) P(t, \zeta/\xi)
\frac{|d\xi|}{2\pi}-
\int_{\mathbb T} \sum_{k=1}^{n-1} P(w_k, \xi) P(t, \zeta/\xi) \frac{|d\xi|}{2\pi} \\
& = \int_{\mathbb T}  \left(\sum_{k=1}^n P(z_k, \xi)-\sum_{k=1}^{n-1}
P(w_k, \xi)\right) P(t, \zeta/\xi) \frac{|d\xi|}{2\pi}\geq 0
\end{split} \]
for all  $\zeta\in\mathbb{T}$. Therefore, the proof is completed.
\end{proof}

\section{Sufficient conditions for rational circle homeomorphisms}\label{circle homeomorphism}

In this section we give some sufficient conditions for the ratio of Blaschke products to be a circle homeomorphisms. In certain cases these conditions are also necessary.

\begin{thm}\label{circle homeomorphism of n} Suppose that
\[
B(\zeta)=\sigma\prod_{k=1}^n \frac{\zeta-z_k}{1-\overline{z_k}\zeta},
\]
where $z_1, z_2, \dots, z_n\in\mathbb{D}$ and $\sigma\in\mathbb{T}$. If
\begin{equation}\label{L61a}
\sum_{k=1}^n \frac{1-|z_k|}{1+|z_k|}\geq n-1,
\end{equation}
then $B(\zeta)/\zeta^{n-1}$ is
a circle homeomorphism. In particular, this holds if $|z_k|\leq\frac{1}{2n-1}$ for all $k$.

If all numbers $z_k$ have the same argument, the condition~\eqref{L61a} is also necessary for $B(\zeta)/\zeta^{n-1}$ to be a circle homeomorphism.
\end{thm}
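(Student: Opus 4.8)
The plan is to use Lemma~\ref{homeomorphism condition}, which tells us that $B(\zeta)/\zeta^{n-1}$ is a circle homeomorphism if and only if $\sum_{k=1}^n P(z_k,\zeta)\ge n-1$ for all $\zeta\in\mathbb T$, since in this case the denominator $\zeta^{n-1}$ contributes Blaschke zeros at the origin with total Poisson mass $n-1$. So the entire statement reduces to an inequality about Poisson kernels: first, that \eqref{L61a} implies $\sum_k P(z_k,\zeta)\ge n-1$ pointwise; and second, under the same-argument hypothesis, a converse.

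For the sufficiency, the key observation is the elementary pointwise bound $P(z,\zeta)\ge \frac{1-|z|}{1+|z|}$ for all $\zeta\in\mathbb T$. Indeed, $P(z,\zeta)=\frac{1-|z|^2}{|\zeta-z|^2}$ and $|\zeta-z|\le 1+|z|$ by the triangle inequality, so $P(z,\zeta)\ge \frac{1-|z|^2}{(1+|z|)^2}=\frac{1-|z|}{1+|z|}$. Summing over $k$ gives $\sum_k P(z_k,\zeta)\ge \sum_k\frac{1-|z_k|}{1+|z_k|}\ge n-1$, which is exactly \eqref{poisson kernel condition} with $m=0$, so Lemma~\ref{homeomorphism condition} applies. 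For the ``in particular'' claim, if $|z_k|\le\frac{1}{2n-1}$ then $\frac{1-|z_k|}{1+|z_k|}\ge\frac{1-1/(2n-1)}{1+1/(2n-1)}=\frac{2n-2}{2n}=\frac{n-1}{n}$, and summing over $k$ yields $\sum_k\frac{1-|z_k|}{1+|z_k|}\ge n-1$.

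For the necessity under the same-argument hypothesis, write $z_k=r_k\tau$ with a common $\tau\in\mathbb T$ and $r_k\in[0,1)$. The point of choosing a common argument is that the minimum of $\sum_k P(z_k,\zeta)$ over $\zeta\in\mathbb T$ is attained simultaneously for all the terms at the antipodal point $\zeta=-\tau$, where $|\zeta-z_k|=1+r_k$ is maximal; hence $\min_{\zeta}\sum_k P(z_k,\zeta)=\sum_k P(z_k,-\tau)=\sum_k\frac{1-r_k^2}{(1+r_k)^2}=\sum_k\frac{1-r_k}{1+r_k}$. If $B(\zeta)/\zeta^{n-1}$ is a circle homeomorphism, Lemma~\ref{homeomorphism condition} forces $\sum_k P(z_k,\zeta)\ge n-1$ for all $\zeta$, in particular at $\zeta=-\tau$, giving $\sum_k\frac{1-r_k}{1+r_k}\ge n-1$, which is \eqref{L61a}.

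The only mildly delicate point is confirming that for the necessity direction the common-argument case really does make $\zeta=-\tau$ the simultaneous minimizer; this is immediate once one notes $P(r\tau,\zeta)=\frac{1-r^2}{|\zeta-r\tau|^2}$ is a decreasing function of $|\zeta-r\tau|$ and that $|\zeta-r\tau|\le 1+r$ with equality exactly at $\zeta=-\tau$, uniformly in $r$. Everything else is a routine triangle-inequality estimate, so I do not anticipate a genuine obstacle; the substance of the theorem is simply the recognition that Lemma~\ref{homeomorphism condition} reduces the problem to these Poisson-kernel bounds and that the cheap bound $P(z,\zeta)\ge\frac{1-|z|}{1+|z|}$ is \emph{sharp} precisely when all arguments coincide.
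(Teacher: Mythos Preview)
Your proof is correct and follows essentially the same argument as the paper: the pointwise Poisson bound $P(z,\zeta)\ge\frac{1-|z|}{1+|z|}$ for sufficiency, and evaluation at the antipodal point $\zeta=-\tau$ for necessity under the common-argument hypothesis. One small slip: where you write ``\eqref{poisson kernel condition} with $m=0$,'' you mean $m=n-1$ with all $w_k=0$ (so that $\sum_k P(w_k,\zeta)=n-1$), which is exactly what you use.
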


\begin{proof} For $k=1, \dots, n,$ we have
\begin{equation}\label{lowerPoisson}
P(z_k, \zeta) = \frac{1-|z_k|^2}{|\zeta-z_k|^2}
\ge \frac{1-|z_k|}{1+|z_k|}.
\end{equation}
If~\eqref{L61a} holds, then ~\eqref{lowerPoisson} implies $\sum_{k=1}^n P(z_k, \zeta) \ge n-1 $. It follows from Lemma~\ref{homeomorphism condition} that $B(\zeta)/\zeta^{n-1}$ is a circle homeomorphism.

In the case when all numbers $z_k$ have the same argument, we can choose $\zeta\in \mathbb T$ such that $\overline{\zeta} z_k\le 0$ for every $k$. Then equality holds in~\eqref{lowerPoisson}, which shows that~\eqref{L61a} is necessary in order to have $\sum_{k=1}^n P(z_k, \zeta) \ge n-1 $.
\end{proof}

In general it seems difficult to describe precisely when $B(\zeta)/\zeta^{n-1}$ is a circle homeomorphism, but for Blaschke products of degree $2$ the picture is more complete.

\begin{thm}\label{circle homeomorphism of 2} Suppose that
\[
B(\zeta)=\sigma  \frac{\zeta-a}{1-\overline{a}\zeta}
\frac{\zeta-b}{1-\overline{b}\zeta},
\]
where $a, b\in\mathbb{D}\cap \mathbb R$ and $\sigma \in \mathbb T$. Then $B(\zeta)/\zeta$ is a circle homeomorphism if and only if one of the following conditions holds.
\begin{enumerate}[(a)]
    \item $ab\ge 0$ and $1 - |a| - |b| - 3ab \ge 0$;
    \item $ab\le 0$ and $1  + ab - a^2 - b^2 \ge 0$.
\end{enumerate}
\end{thm}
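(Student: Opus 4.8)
The plan is to apply Lemma~\ref{homeomorphism condition} directly: with $n=2$, $m=1$, and $w_1 = 0$ (since $B(\zeta)/\zeta$ corresponds to dividing $B$ by the degree-one Blaschke product $\zeta$), the map $B(\zeta)/\zeta$ is a sense-preserving circle homeomorphism if and only if
\[
P(a,\zeta) + P(b,\zeta) \ge P(0,\zeta) = 1 \quad \text{for all } \zeta\in\mathbb T.
\]
So the whole problem reduces to minimizing the function $\zeta\mapsto P(a,\zeta)+P(b,\zeta)$ over $\mathbb T$ and checking when that minimum is $\ge 1$. First I would write $\zeta = e^{i\theta}$ and use $P(x,e^{i\theta}) = \frac{1-x^2}{1 - 2x\cos\theta + x^2}$ for real $x$, so that the constraint becomes an inequality in the single variable $c := \cos\theta \in [-1,1]$:
\[
\frac{1-a^2}{1 - 2ac + a^2} + \frac{1-b^2}{1 - 2bc + b^2} \ge 1, \qquad c\in[-1,1].
\]
Clearing the (positive) denominators turns this into $Q(c)\ge 0$ for all $c\in[-1,1]$, where $Q$ is a polynomial of degree $2$ in $c$; in fact one computes that the $c^2$-coefficient is $4ab$, so $Q$ is affine when $ab=0$ and genuinely quadratic otherwise. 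This is where the two cases of the theorem come from.

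The key steps are then: (1) expand $Q(c)$ explicitly and record its leading coefficient $4ab$; (2) in the case $ab \le 0$, the parabola opens downward (or $Q$ is affine), so its minimum on $[-1,1]$ is attained at an endpoint $c = \pm 1$ — evaluate $Q(\pm 1)$, observe that $1 - 2ac + a^2 = (1\mp a)^2$ etc., and check that after simplification the binding endpoint condition is exactly $1 + ab - a^2 - b^2 \ge 0$, giving (b); (3) in the case $ab \ge 0$, the parabola opens upward, so the minimum is attained either at an interior critical point or at an endpoint. Compute the vertex $c^\ast = Q'(0)/(−8ab)$ (after checking $ab \neq 0$; the case $ab = 0$ with $Q$ affine is handled as in step (2) and is consistent with both (a) and (b)), determine when $c^\ast \in [-1,1]$, and evaluate $Q(c^\ast)$. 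The claim is that the resulting minimum-value condition simplifies to $1 - |a| - |b| - 3ab \ge 0$. By symmetry one may assume $a,b \ge 0$ here (replacing the pair $(a,b)$ by $(-a,-b)$ reflects $\theta\mapsto\theta+\pi$ and leaves the constraint invariant), so $|a|=a$, $|b|=b$, and the target is $1 - a - b - 3ab \ge 0$.

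The main obstacle is step (3): verifying that the minimum of the upward parabola — which a priori depends on whether the vertex falls inside $[-1,1]$ — collapses to the single clean inequality $1 - |a| - |b| - 3ab \ge 0$ regardless. I expect that when the vertex is interior, $Q(c^\ast) \ge 0$ factors (up to a positive multiple) as $(1 - a - b - 3ab)$ times a manifestly positive quantity, while when the vertex lies outside $[-1,1]$ one of the endpoint conditions $Q(\pm 1)\ge 0$ is the active one and it is implied by $1 - a - b - 3ab \ge 0$ under $ab \ge 0$ (indeed $Q(1)$ corresponds to $1-a-b+ab = (1-a)(1-b) > 0$ automatically, and $Q(-1)$ to $1+a+b+ab>0$, so the endpoint constraints are vacuous and the interior vertex is always the binding one once $ab>0$ and the vertex is interior; one must also check the vertex is automatically in range when $0 \le a,b < 1$). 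Carrying out this discriminant/vertex bookkeeping carefully, and confirming the factorizations, is the only real work; everything else is substitution into Lemma~\ref{homeomorphism condition}.
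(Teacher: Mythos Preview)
Your reduction via Lemma~\ref{homeomorphism condition} to the inequality $P(a,\zeta)+P(b,\zeta)\ge 1$ and then to a quadratic $Q(c)$ in $c=\cos\theta$ is exactly the paper's method, but you have the leading coefficient wrong: clearing denominators in
\[
\frac{1-a^2}{1-2ac+a^2}+\frac{1-b^2}{1-2bc+b^2}-1
\]
gives $Q(c)=-4ab\,c^2+4ab(a+b)\,c+\text{const}$, with leading coefficient $-4ab$, not $+4ab$ (the only $c^2$ contribution comes from $-(-2ac)(-2bc)$). This single sign error flips your two cases. When $ab\le 0$ the parabola opens \emph{upward}, the vertex $c^\ast=(a+b)/2$ lies in $(-1,1)$, and $Q\bigl((a+b)/2\bigr)=(1-ab)(1+ab-a^2-b^2)$ yields condition~(b). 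When $ab\ge 0$ the parabola opens \emph{downward}, so the minimum on $[-1,1]$ is at an endpoint; one computes
\[
Q(1)=(1-a)(1-b)(1+a+b-3ab),\qquad Q(-1)=(1+a)(1+b)(1-a-b-3ab),
\]
and for $a,b\ge 0$ (the case $a,b\le 0$ being symmetric) the first factor is automatically positive while the second gives exactly condition~(a). So the factorizations you anticipated in steps~(2) and~(3) are swapped, and the specific endpoint values you wrote down (``$1-a-b+ab$'', etc.) are not what actually appears.

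As a side remark, the paper takes a shortcut for part~(a): since $ab\ge 0$ means $a$ and $b$ share an argument, Theorem~\ref{circle homeomorphism of n} already gives the necessary and sufficient condition $\tfrac{1-|a|}{1+|a|}+\tfrac{1-|b|}{1+|b|}\ge 1$, which simplifies algebraically to $1-|a|-|b|-3ab\ge 0$. Only part~(b) is treated via the quadratic computation. Your plan of handling both parts uniformly through $Q(c)$ is perfectly valid once the sign is corrected.
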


\begin{proof} Part (a) follows from Theorem~\ref{circle homeomorphism of n} after observing that
\[
\frac{1-|a|}{1+|a|} + \frac{1-|b|}{1+|b|} - 1
= \frac{1 -|a| -|b| - 3|ab|}{(1+|a|)(1+|b|)}.
\]

In part (b), our goal is to determine when
$P(a,\zeta) + P(b,\zeta) \ge 1$ for all $\zeta\in \mathbb T$. The quantity
\[
P(a,\zeta) + P(b,\zeta) - 1
= \re\left(\frac{\zeta + a}{\zeta - a}
+\frac{2b}{\zeta - b}\right)
= \re \frac{\zeta^2 + (a+b)\zeta - 3ab}{\zeta^2 - (a+b)\zeta + ab}
\]
is nonnegative if and only if the product
\begin{equation}\label{c2b}
\left(\zeta^2 + (a+b)\zeta - 3ab\right)
\left(\bar \zeta^2 - (a+b)\bar\zeta + ab\right)
\end{equation}
has nonnegative real part. Expand~\eqref{c2b} and rewrite it in terms of $x = \re\zeta$, using the identity $\re (\zeta^2) = 2x^2-1$. The result is the quadratic function
\[
\begin{split}
g(x) & = -2ab (2x^2-1) + 4ab(a+b)x + 1 - (a+b)^2 - 3a^2b^2 \\
& = -4ab x^2 + 4ab(a+b)x + 1 - a^2 - b^2 - 3a^2b^2
\end{split}
\]
Since $ab\le 0$, the minimum of $g$ is attained at $x = (a+b)/2 \in (-1, 1)$. Thus, the inequality $\min_{[-1, 1]}g\ge 0$ is equivalent to $g((a+ b)/2)\ge 0$. Since
\[g\left(\frac{a+b}{2}\right) =
(1-ab) (1  + ab - a^2 - b^2)
\]
and $1-ab > 0$, part (b) is proved.
\end{proof}

The second natural way to form a circle homeomorphism from a Blaschke product of degree $n$ is $\zeta\mapsto \zeta^{n+1}/B(\zeta)$. For this construction we have a statement that parallels Theorem~\ref{circle homeomorphism of n}.

\begin{thm}\label{circle homeomorphism second kind} Suppose that
\[
B(\zeta)=\sigma\prod_{k=1}^n \frac{\zeta-z_k}{1-\overline{z_k}\zeta},
\]
where $z_1, z_2, \dots, z_n\in\mathbb{D}$ and $\sigma\in\mathbb{T}$. If
\begin{equation}\label{L62a}
\sum_{k=1}^n \frac{1+|z_k|}{1-|z_k|}\leq n+1,
\end{equation}
then $\zeta^{n+1}/B(\zeta)$ is
a circle homeomorphism. In particular, this holds if $|z_k|\leq\frac{1}{2n+1}$ for all $k$.

If all numbers $z_k$ have the same argument, the condition~\eqref{L62a} is also necessary for $\zeta^{n+1}/B(\zeta)$ to be a circle homeomorphism.
\end{thm}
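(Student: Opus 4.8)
The plan is to mirror the proof of Theorem~\ref{circle homeomorphism of n}, trading the lower bound on the Poisson kernel for an upper bound and reversing all the inequalities. First I would invoke Lemma~\ref{homeomorphism condition} with $B_1(\zeta)=\zeta^{n+1}$, regarded as a Blaschke product of degree $n+1$ all of whose zeros sit at the origin, and with $B_2=B$, a Blaschke product of degree $n$. Since the degree difference is $(n+1)-n=1$, the lemma says that $\zeta^{n+1}/B(\zeta)$ is a (sense-preserving) circle homeomorphism exactly when
\[
\sum_{k=1}^{n+1} P(0,\zeta)\ \ge\ \sum_{k=1}^{n} P(z_k,\zeta),\qquad \zeta\in\mathbb{T},
\]
and because $P(0,\zeta)=1$ this reduces to the requirement that $\sum_{k=1}^n P(z_k,\zeta)\le n+1$ for every $\zeta\in\mathbb{T}$.

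Next I would record the elementary upper estimate
\[
P(z_k,\zeta)=\frac{1-|z_k|^2}{|\zeta-z_k|^2}\ \le\ \frac{1-|z_k|^2}{(1-|z_k|)^2}\ =\ \frac{1+|z_k|}{1-|z_k|},
\]
which uses only $|\zeta-z_k|\ge 1-|z_k|$. Summing over $k$ shows that~\eqref{L62a} forces $\sum_{k=1}^n P(z_k,\zeta)\le n+1$ for all $\zeta$, so the homeomorphism conclusion follows from the reformulation of Lemma~\ref{homeomorphism condition} above. For the ``in particular'' clause I would note that the function $t\mapsto (1+t)/(1-t)$ is increasing, so $|z_k|\le \frac{1}{2n+1}$ gives $\frac{1+|z_k|}{1-|z_k|}\le \frac{n+1}{n}$, whence the left-hand side of~\eqref{L62a} is at most $n\cdot\frac{n+1}{n}=n+1$.

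For the necessity statement under the equal-argument hypothesis, I would write $z_k=|z_k|e^{i\alpha}$ with a common $\alpha$ and take $\zeta=e^{i\alpha}$. Then $\zeta-z_k=(1-|z_k|)e^{i\alpha}$, so $|\zeta-z_k|=1-|z_k|$ and equality holds in the Poisson estimate for every $k$ simultaneously. Thus at this particular $\zeta$ one has $\sum_{k=1}^n P(z_k,\zeta)=\sum_{k=1}^n \frac{1+|z_k|}{1-|z_k|}$, and the condition $\sum_{k=1}^n P(z_k,\zeta)\le n+1$, which is necessary by Lemma~\ref{homeomorphism condition}, becomes precisely~\eqref{L62a}.

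I do not anticipate a real obstacle here; the argument is genuinely parallel to that of Theorem~\ref{circle homeomorphism of n}. The only point that needs care is the bookkeeping in applying Lemma~\ref{homeomorphism condition}: one must treat $\zeta^{n+1}$ as a Blaschke product with $n+1$ coincident zeros at the origin so that the degree condition $n-m=1$ holds, and one must be sure the resulting Poisson-sum inequality comes out as an upper bound $\sum P(z_k,\zeta)\le n+1$ rather than the lower bound that appears in the first-kind construction.
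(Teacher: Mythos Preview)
Your proposal is correct and follows essentially the same route as the paper: bound each Poisson term above by $\frac{1+|z_k|}{1-|z_k|}$, invoke Lemma~\ref{homeomorphism condition} with $\zeta^{n+1}$ viewed as a Blaschke product with $n+1$ zeros at the origin so that the criterion becomes $\sum_k P(z_k,\zeta)\le n+1$, and for necessity pick $\zeta$ so that $\bar\zeta z_k\ge 0$ to saturate the bound. Your write-up is in fact slightly more explicit than the paper's about the application of Lemma~\ref{homeomorphism condition} and the ``in particular'' clause.
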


\begin{proof} For $k=1, \dots, n,$ we have
\begin{equation}\label{upperPoisson}
P(z_k, \zeta) = \frac{1-|z_k|^2}{|\zeta-z_k|^2}
\le \frac{1+|z_k|}{1-|z_k|}.
\end{equation}
If~\eqref{L62a} holds, then ~\eqref{upperPoisson} implies $\sum_{k=1}^n P(z_k, \zeta) \le n+1$, and the conclusion follows from Lemma~\ref{homeomorphism condition}.

When all numbers $z_k$ have the same argument, we can choose $\zeta\in \mathbb T$ such that $\overline{\zeta} z_k\ge 0$ for every $k$. Then equality holds in~\eqref{upperPoisson}, which shows that~\eqref{L62a} is necessary in order to have $\sum_{k=1}^n P(z_k, \zeta) \le n+1$.
\end{proof}

Condition~\eqref{L62a} turns out to be more restrictive than ~\eqref{L61a}. Indeed, the inequality between arithmetic and harmonic means~\cite[Ch. 2]{HLP} implies
\[
\frac{1}{n} \sum_{k=1}^n \frac{1-|z_k|}{1+|z_k|}
\ge \left(\frac{1}{n} \sum_{k=1}^n \frac{1+|z_k|}{1-|z_k|}\right)^{-1}.
\]
If~\eqref{L62a} holds, then
\[
\frac{1}{n} \sum_{k=1}^n \frac{1-|z_k|}{1+|z_k|} \ge
\frac{n}{n+1} > \frac{n-1}{n},
\]
which implies ~\eqref{L61a}.

\section*{Acknowledgments}

The work of Leonid Kovalev was supported by the National Science Foundation grant DMS-1764266. The work of Liulan Li was partly supported by the Science and Technology Plan Project of Hunan Province (No. 2016TP1020), and the Application-Oriented Characterized Disciplines, Double First-Class University Project of Hunan Province (Xiangjiaotong [2018]469).

The authors thank the anonymous referee for several  suggestions which improved the organization of this article.

\end{document}